\renewcommand{\@seccntformat}[1]{\bf\csname the#1\endcsname.}
\renewcommand{\section}{\@startsection{section}{1}
	\z@{.7\linespacing\@plus\linespacing}{.5\linespacing}
	{\normalfont\upshape\bfseries\centering}}
\renewcommand{\@biblabel}[1]{\@ifnotempty{#1}{#1.}}
\theoremstyle{plain}
\newtheorem{thm}{Theorem}[section]
\theoremstyle{definition}
\def \>{\succ}
\def \<{\prec}
\begin{document}	
\title[Imed Basdouri\textsuperscript{1}, Bouzid Mosbahi\textsuperscript{2}]{Rota-type operators on 2-dimensional dendriform algebras}
	\author{Imed Basdouri\textsuperscript{1}, Bouzid Mosbahi\textsuperscript{2}}
 \address{\textsuperscript{1}Department of Mathematics, Faculty of Sciences, University of Gafsa, Gafsa, Tunisia}
\address{\textsuperscript{2}Department of Mathematics, Faculty of Sciences, University of Sfax, Sfax, Tunisia}

 \email{\textsuperscript{1}basdourimed@yahoo.fr}
\email{\textsuperscript{2}mosbahi.bouzid.etud@fss.usf.tn}
	
	
	\keywords{Rota-Baxter operator, Reynolds operator, Nijenhuis operator, average operator,
 dendriform algebra}
	\subjclass[2020]{17A30,17B38, 16W20, 16S50}
	
	\date{\today}

\begin{abstract}
We describe Rota–Baxter operators, Reynolds operators, Nijenhuis operators, and Averaging operators on 2-dimensional dendriform algebras over $\mathbb{C}$.
\end{abstract}

\maketitle

\section{Introduction}\label{introduction}

In the early 1990s, Loday introduced a new class of algebras, known as \textit{dendriform algebras} \cite{1,2,3}. Since their introduction, dendriform algebras have found applications in various areas of mathematics and mathematical physics \cite{4,5,6,7,8,9}. A dendriform algebra $E$  over a field $\mathbb{K}$  is a  $\mathbb{K}$-vector space equipped with two binary operations
\begin{align*}
\succ: E \otimes E \rightarrow E, \quad \prec: E \otimes E \rightarrow E,
\end{align*}
satisfying the following identities for all $u, v, w \in E$:
\begin{align}
    (u \prec v) \prec w &= u \prec (v \prec w) + u \prec (v \succ w), \label{eq1} \\
    (u \succ v) \prec w &= u \succ (v \prec w), \label{eq2} \\
    u \succ (v \succ w)&=(u \prec v) \succ w + (u \succ v) \succ w. \label{eq3}
\end{align}
These identities decompose the associativity condition into two operations, creating a structure essential to fields such as non-commutative geometry, operad theory, and combinatorics.

Another important concept in this study is the notion of Rota–Baxter operators. These operators have been applied in many fields \cite{10,11,12}. A \textit{Rota–Baxter operator} on a dendriform algebra  $E$ over a field  $\mathbb{C}$ is a linear map $P: E \to E$ satisfying
\begin{align}
    P(u) \prec P(v) &= P\left( P(u) \prec v + u \prec P(v) + \lambda \, u \prec v \right), \label{eq4} \\
    P(u) \succ P(v) &= P\left( P(u) \succ v + u \succ P(v) + \lambda \, u \prec v \right), \label{eq5}
\end{align}
for all $u, v \in E$ and some $\lambda \in \mathbb{C}$. If  $P$ is a Rota–Baxter operator of weight $0$, it is also a Rota–Baxter operator of weight $1$, so it suffices to consider these two cases.

This paper focuses on specific types of Rota-type operators, such as Reynolds, Nijenhuis, and averaging operators, which are defined as follows.

\textit{Reynolds operator:}
\begin{align}
    P(u) \prec P(v) &= P\left( u \prec P(v) + P(u) \prec v - P(u) \prec P(v) \right), \label{eq6} \\
    P(u) \succ P(v) &= P\left( u \succ P(v) + P(u) \succ v - P(u) \succ P(v) \right). \label{eq7}
\end{align}

\textit{Nijenhuis operator:}
\begin{align}
    P(u) \prec P(v) &= P\left( P(u) \prec v + u \prec P(v) - P(u \prec v) \right), \label{eq8} \\
    P(u) \succ P(v) &= P\left( P(u) \succ v + u \succ P(v) - P(u \succ v) \right). \label{eq9}
\end{align}

\textit{Averaging operator:}
\begin{align}
    P(u) \prec P(v) &= P\left( u \prec P(v) \right) = P\left( P(u) \prec v \right), \label{eq10} \\
    P(u) \succ P(v) &= P\left( u \succ P(v) \right) = P\left( P(u) \succ v \right). \label{eq11}
\end{align}

The classification of low-dimensional dendriform algebras provides a foundation for exploring these operators. Previous work includes Rikhsiboev, I. M., Rakhimov, I. S., and Basri, W. I. T. R. I. A. N. Y. (2010).The description of dendriform algebra structures on two-dimensional complex space\cite{3}.

In this paper, we study Rota-type operators on 2-dimensional dendriform algebras. The focus is on identifying, constructing, and analyzing these operators on low-dimensional structures, providing insights into the broader theory.

\begin{thm} \cite{3}
Any two-dimensional dendriform algebra $E$ is isomorphic to one of the following pairwise non-isomorphic algebras with a basis $\{e_1, e_2\}$:\\
$Dend_2^1:
\begin{array}{ll}
e_1\prec e_1=e_1,
\end{array}\quad
\begin{array}{ll}
e_1\succ e_2=e_2.
\end{array}$

$Dend_2^2(\alpha):
\begin{array}{ll}
e_1\prec e_1=e_2,
\end{array}\quad
\begin{array}{ll}
e_1\succ e_1=\alpha e_2,\; \alpha \in \mathbb{C}.
\end{array}$

$Dend_2^3 :
\begin{array}{ll}
e_1\prec e_1=e_1,\\
e_2\prec e_1=e_2,
\end{array}\quad
\begin{array}{ll}
e_1\succ e_2=e_2,\\
e_2\succ e_1=-e_2.
\end{array}$

$Dend_2^4:
\begin{array}{ll}
e_2\prec e_1=e_2,
\end{array}\quad
\begin{array}{ll}
e_1\succ e_1=e_1.
\end{array}$

$Dend_2^5 :
\begin{array}{ll}
e_1\prec e_2=-e_2,\\
e_2\prec e_1=e_2,
\end{array}\quad
\begin{array}{ll}
e_1\succ e_1=e_1,\\
e_1\succ e_2=e_2.
\end{array}$

$Dend_2^6 :
\begin{array}{ll}
e_1\prec e_1=e_1,
\end{array}\quad
\begin{array}{ll}
e_2\succ e_2=e_2.
\end{array}$

$Dend_2^7 :
\begin{array}{ll}
e_1\prec e_2=-e_2,\\
e_2\prec e_2=e_2,
\end{array}\quad
\begin{array}{ll}
e_1\succ e_1=e_1,\\
e_1\succ e_2=e_2.
\end{array}$

$Dend_2^8 :
\begin{array}{ll}
e_1\prec e_1=e_1+e_2,\\
e_1\prec e_2=-e_2,\\
e_2\prec e_2=e_2,
\end{array}\quad
\begin{array}{ll}
e_1\succ e_1=-e_2,\\
e_1\succ e_2=e_2.
\end{array}$

$Dend_2^9 :
\begin{array}{ll}
e_1\prec e_1=e_1,\\
e_2\prec e_1=e_2,
\end{array}\quad
\begin{array}{ll}
e_2\succ e_1=-e_2,\\
e_2\succ e_2=e_2.
\end{array}$

$Dend_2^{10} :
\begin{array}{ll}
e_1\prec e_1=-e_2,\\
e_2\prec e_1=e_2,
\end{array}\quad
\begin{array}{ll}
e_1\succ e_1=e_1+e_2,\\
e_2\succ e_1=-e_2,\\
e_2\succ e_2=e_2.
\end{array}$

$Dend_2^{11} :
\begin{array}{ll}
e_2\prec e_1=e_2,
\end{array}\quad
\begin{array}{ll}
e_1\succ e_1=e_1,\\
e_1\succ e_2=e_2.
\end{array}$

$Dend_2^{12} :
\begin{array}{ll}
e_1\prec e_1=e_1,\\
e_2\prec e_1=e_2,
\end{array}\quad
\begin{array}{ll}
e_1\succ e_2=e_2,
\end{array}$
\end{thm}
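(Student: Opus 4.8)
The plan is to realise a dendriform structure on $E=\mathbb{C}e_1\oplus\mathbb{C}e_2$ as a point of an affine variety. Writing $e_i\prec e_j=\sum_{k}a_{ij}^{k}e_k$ and $e_i\succ e_j=\sum_{k}b_{ij}^{k}e_k$, the structure is encoded by $16$ scalars $a_{ij}^{k},b_{ij}^{k}\in\mathbb{C}$, and substituting them into the three dendriform identities \eqref{eq1}--\eqref{eq3} evaluated on all eight triples $(e_i,e_j,e_k)$ with $i,j,k\in\{1,2\}$ yields a finite system of quadratic equations cutting out exactly the dendriform structures on $E$. Two such structures are isomorphic precisely when they lie in one orbit of the $\mathrm{GL}_2(\mathbb{C})$-action by change of basis, so the theorem is the assertion that this orbit space consists of the twelve listed classes (with $Dend_2^2(\alpha)$ a one-parameter family).

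Second, to avoid a brute-force attack on the full quadratic system I would exploit the associated associative algebra: a direct check from \eqref{eq1}--\eqref{eq3} shows that $u\ast v:=u\prec v+u\succ v$ is associative, and every dendriform isomorphism is in particular an isomorphism of the algebras $(E,\ast)$. I would therefore stratify by the isomorphism type of $(E,\ast)$, invoking the known finite classification of $2$-dimensional associative $\mathbb{C}$-algebras (the null algebra, $\mathbb{C}\times\mathbb{C}$, $\mathbb{C}[x]/(x^2)$, and the non-unital ones). For each fixed $(E,\ast)$ the remaining task is to find all decompositions $\ast=\prec+\succ$ obeying \eqref{eq1}--\eqref{eq3}; since $\succ$ is then determined by $\prec$, this is a far smaller system, solved stratum by stratum.

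Third, inside each stratum one quotients the surviving free parameters by $\mathrm{Aut}(E,\ast)$, which is explicit for each of the finitely many associative algebras, together with the residual freedom of swapping or rescaling $e_1,e_2$. Normalising the remaining constants (clearing coefficients, rescaling, applying $e_1\leftrightarrow e_2$ where permitted) produces the candidate normal forms, which are matched against $Dend_2^1,\dots,Dend_2^{12}$. The modulus in $Dend_2^2(\alpha)$ appears precisely in the stratum where $(E,\ast)$ is too small — here the null algebra, for which $e_1\prec e_1=e_2$ and $e_1\succ e_1=\alpha e_2$ — so that, although $\mathrm{Aut}(E,\ast)=\mathrm{GL}_2(\mathbb{C})$, the ratio $\alpha$ between $\succ$ and $\prec$ is scale-invariant and cannot be absorbed.

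Finally I would verify pairwise non-isomorphism using discrete invariants: the isomorphism type of $(E,\ast)$, the dimensions of the left and right annihilators of $\prec$, $\succ$ and $\ast$, which (if any) of $\prec,\succ$ vanishes identically, and idempotency data such as $\{x\in E:x\prec x=x\}$; for $Dend_2^2(\alpha)$ one checks that $\alpha$ itself (or a simple M\"obius image of it) is an isomorphism invariant, so distinct parameters give non-isomorphic algebras. The main obstacle is bookkeeping rather than conceptual: even after the stratification by $(E,\ast)$ tames the algebra, the normalisation fragments into many subcases, and one must be careful neither to overlook a stratum nor to list the same $\mathrm{GL}_2(\mathbb{C})$-orbit twice — an organised but lengthy elimination, which is doubtless why the statement is quoted here from \cite{3} rather than reproved.
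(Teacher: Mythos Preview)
The paper does not prove this theorem: it is quoted from \cite{3} (Rikhsiboev--Rakhimov--Basri) and serves only as input for the subsequent computations of Rota-type operators. There is thus no proof in the paper to compare your proposal against, a fact you yourself note in your final sentence.

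As a free-standing outline your strategy is the standard one and is sound, but there is one concrete slip. You place the one-parameter family $Dend_2^2(\alpha)$ in the stratum where $(E,\ast)$ is the null algebra, with $\mathrm{Aut}(E,\ast)=\mathrm{GL}_2(\mathbb{C})$. In fact $e_1\ast e_1=(1+\alpha)e_2$, so $(E,\ast)$ is the null algebra only for $\alpha=-1$; for every other $\alpha$ the associated associative algebra is (after rescaling $e_2$) the nilpotent algebra $e_1^2=e_2$, whose automorphism group is only the two-parameter group $e_1\mapsto a e_1+b e_2$, $e_2\mapsto a^2 e_2$ with $a\neq 0$. A direct check shows these automorphisms still preserve the ratio $\alpha$ between $\succ$ and $\prec$, so your conclusion that $\alpha$ is an isomorphism invariant survives, but the family actually straddles two associative strata and your stated mechanism for the invariance (``$\mathrm{Aut}(E,\ast)=\mathrm{GL}_2(\mathbb{C})$ but the ratio is scale-invariant'') is incorrect. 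Beyond this, the proposal is an honest sketch whose entire content would lie in the case-by-case elimination you allude to but do not carry out.
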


Now let $P$ be a linear operator on $E$ such that
\begin{align*}
\left(\begin{array}{cc}
P(e_1) \\
P(e_2)
\end{array}\right)
&=\left(\begin{array}{cc}
a_{11} & a_{12} \\
a_{21} & a_{22}
\end{array}\right)
\left(\begin{array}{cc}
e_1 \\
e_2
\end{array}\right)
\end{align*}

\section{ Main result}
\subsection { Rota-Baxter operator}
\begin{thm}
There is a type of Rota-Baxter operator of weight 0 for the $2$-dimensional dendriform algebra
$Dend_2^{1}$ , which is as follows:
$P_1=\left(\begin{array}{cc}
 0 & a_{12} \\
0 & 0
\end{array}\right)$
\end{thm}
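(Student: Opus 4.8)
The plan is to set $P(e_1) = a_{11} e_1 + a_{12} e_2$ and $P(e_2) = a_{21} e_1 + a_{22} e_2$, substitute into the two Rota--Baxter identities \eqref{eq4} and \eqref{eq5} with $\lambda = 0$, and expand both sides using the multiplication table of $Dend_2^1$ (namely $e_1 \prec e_1 = e_1$, $e_1 \succ e_2 = e_2$, and all other products of basis elements zero). It suffices by bilinearity to check the identities on the pairs $(u,v) \in \{(e_1,e_1),(e_1,e_2),(e_2,e_1),(e_2,e_2)\}$, so each of \eqref{eq4} and \eqref{eq5} yields four vector equations, i.e.\ eight pairs of scalar equations in the four unknowns $a_{11}, a_{12}, a_{21}, a_{22}$.

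First I would treat \eqref{eq4}. On $(e_1,e_1)$: the left side is $P(e_1) \prec P(e_1) = a_{11}^2 (e_1 \prec e_1) + (\text{terms with } e_2 \text{ on left or right, all zero}) = a_{11}^2 e_1$; the right side is $P(a_{11} P(e_1) \prec e_1 + a_{11} e_1 \prec P(e_1)) = P(2a_{11}^2 e_1) = 2a_{11}^2 P(e_1) = 2a_{11}^3 e_1 + 2 a_{11}^2 a_{12} e_2$. Matching coefficients gives $a_{11}^2 = 2 a_{11}^3$ and $0 = 2 a_{11}^2 a_{12}$. Running through the remaining pairs in the same mechanical way produces the full polynomial system; I expect the relations forcing $a_{11} = 0$ and $a_{21} = 0$ to emerge quickly (e.g.\ from the $e_1 \succ e_2$ products in \eqref{eq5}), after which the constraints on $a_{12}$ and $a_{22}$ collapse to $a_{22} = 0$ with $a_{12}$ free, giving exactly $P_1 = \begin{pmatrix} 0 & a_{12} \\ 0 & 0 \end{pmatrix}$.

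The main obstacle is purely bookkeeping: there are sixteen bilinear expansions to carry out carefully (four pairs $\times$ two operations $\times$ two sides), and within each expansion one must track which of the four scalar products $a_{ij} a_{kl}$ survive against the sparse multiplication table. A convenient shortcut is to observe that in $Dend_2^1$ a product of two basis vectors is nonzero only when the \emph{left} factor involves $e_1$, and that $e_2$ is a "sink" ($e_2 \prec x = x \succ e_1 = 0$ for the relevant inputs), which kills most cross terms and makes the $a_{22}$-dependence essentially trivial. After deriving the necessary conditions, I would verify sufficiency by plugging $P_1$ back into \eqref{eq4}--\eqref{eq5}: since $P_1(e_1) = 0$ and $P_1(e_2) = a_{12} e_1$, the left sides become $a_{12}^2 (e_1 \prec e_1)$ or $a_{12}^2(e_1 \succ e_1)$ etc., and one checks these match $P_1$ applied to the right-hand combinations, completing the characterization. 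I would also note explicitly why weight $1$ need not be treated separately here, per the remark following \eqref{eq5}.
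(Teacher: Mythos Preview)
Your overall strategy is exactly the paper's: write $P$ as a general $2\times 2$ matrix, evaluate the two Rota--Baxter identities on each basis pair, and solve the resulting polynomial system. However, your displayed sample computation is off by a factor. For $u=v=e_1$ in \eqref{eq4} the right side is
\[
P\bigl(P(e_1)\prec e_1 + e_1\prec P(e_1)\bigr)=P(a_{11}e_1+a_{11}e_1)=P(2a_{11}e_1)=2a_{11}^2 e_1+2a_{11}a_{12}e_2,
\]
not $P(2a_{11}^2 e_1)$; there is no extra $a_{11}$ multiplying $P(e_1)\prec e_1$. The correct equations are $a_{11}^2=2a_{11}^2$ and $0=2a_{11}a_{12}$, which force $a_{11}=0$ immediately, whereas your equation $a_{11}^2=2a_{11}^3$ would leave the spurious possibility $a_{11}=\tfrac12$ to be eliminated later.

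There is also a convention slip in your final verification: with the paper's row convention $P(e_1)=a_{11}e_1+a_{12}e_2$, $P(e_2)=a_{21}e_1+a_{22}e_2$ (which you adopted at the outset), the matrix $P_1=\begin{pmatrix}0&a_{12}\\0&0\end{pmatrix}$ gives $P_1(e_1)=a_{12}e_2$ and $P_1(e_2)=0$, not $P_1(e_1)=0$ and $P_1(e_2)=a_{12}e_1$. Finally, your closing remark about weight $1$ is misplaced: the comment after \eqref{eq5} only says that weights $0$ and $1$ exhaust all cases up to rescaling, not that weight $1$ reduces to weight $0$; the paper treats weight $1$ in a separate theorem. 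These are bookkeeping slips rather than structural gaps, and once corrected your outline coincides with the paper's argument.
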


\begin{proof}
The Rota-Baxter operator $P$ of weight $0$ for a two-dimensional dendriform algebra, satisfies the relation:
\begin{align*}
P(u)\prec P(v)&=P(P(u)\prec v+u\prec P(v)),\\
P(u)\succ P(v)&=P(P(u)\succ v+u\succ P(v)),
\end{align*}
where  $\prec$ and $\succ$ are the dendriform operations defined by the algebra
$Dend_2^1:
\begin{array}{ll}
e_1\prec e_1=e_1,
\end{array}\quad
\begin{array}{ll}
e_1\succ e_2=e_2.
\end{array}$
Given $P$ as a linear operator on $E$ represented by a matrix $R$ such that:
$$P(e_1)= a_{11}e_1 + a_{12} e_2 \; and \; P(e_2)= a_{21}e_1 + a_{22} e_2$$
\textbf{Case 1.} For $u=e_1$ and $v=e_1$
\begin{align*}
P(e_1)\prec P(e_1)=(a_{11}e_1+a_{12}e_2)\prec(a_{11}e_1+a_{12}e_2)=a_{11}^2e_1,
\\
P(P(e_1)\prec e_1 +e_1 \prec P(e_1))=P(a_{11}e_1+e_1)=P((a_{11}+1)e_1)=a_{11}(a_{11}+1)e_1.
\end{align*}
Setting these equal to each other, we get: $a_{11}^2e_1 =a_{11}(a_{11}+1)e_1$. So, $a_{11}^2 =a_{11}(a_{11}+1)$\\
This simplifies to:  $a_{11}^2 =a_{11}^2+a_{11}$. Hence, $a_{11}=0$,

\textbf{Case 2.} For $u=e_1$ and $v=e_2$
\begin{align*}
P(e_1)\succ P(e_1)=(a_{11}e_1+a_{12}e_2)\succ(a_{21}e_1+a_{22}e_2)=a_{11}a_{21}e_2,\\
P(P(e_1)\succ e_2 +e_1 \succ P(e_2))=P(a_{11}e_2+a_{21}e_2)=P((a_{11}+a_{21})e_2)=a_{12}(a_{11}+a_{21})e_2.
\end{align*}
Setting these equal to each other, we get: $a_{11}a_{21}e_2 =a_{12}(a_{11}+a_{21})e_2$. So, $a_{11}a_{21} =a_{12}(a_{11}+a_{21})$.
Since $a_{11}=0$, we have: $0=a_{12}a_{21}$
This simplifies to:  $a_{11}^2 =a_{11}^2+a_{11}$. Hence, $a_{11}=0$, which implies either $a_{12}=0$ or $a_{21}=0$

\textbf{Case 3.} For $u=e_2$ and $v=e_1$
\begin{align*}
P(e_2)\prec P(e_1)=(a_{21}e_1+a_{22}e_2)\prec(a_{11}e_1+a_{12}e_2)=0,\\
P(P(e_2)\prec e_1 +e_2 \prec P(e_1))=P(0+0)=0.
\end{align*}
This condition is automatically satisfied as both sides are zero.

\textbf{Case 4.} For $u=e_2$ and $v=e_2$
\begin{align*}
P(e_2)\succ P(e_2)=(a_{21}e_1+a_{22}e_2)\succ(a_{21}e_1+a_{22}e_2)=0,\\
P(P(e_2)\succ e_2 +e_2 \succ P(e_2))=P(0+0)=0.
\end{align*}
This condition is also automatically satisfied as both sides are zero.
Finally, combining all these results, we get:
$a_{11}=0,\; a_{21}=0,\; a_{12} =\text{arbitrary},\; a_{22}=0$.
\end{proof}

\begin{thm}
The Rota-Baxter operators of weight $1$ on the $2$-dimensional dendriform algebra $Dend_2^{1}$ are the following:
$P=\left(\begin{array}{cc}
 0 & 0 \\
0 & 0
\end{array}
\right)$
\end{thm}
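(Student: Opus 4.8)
The plan is to repeat the case analysis used in the weight-$0$ theorem, but now with the weight-$1$ Rota--Baxter identities
\begin{align*}
P(u)\prec P(v)&=P\bigl(P(u)\prec v+u\prec P(v)+u\prec v\bigr),\\
P(u)\succ P(v)&=P\bigl(P(u)\succ v+u\succ P(v)+u\prec v\bigr),
\end{align*}
specialized to $Dend_2^1$ with $e_1\prec e_1=e_1$ and $e_1\succ e_2=e_2$ the only nonzero products. Writing $P(e_1)=a_{11}e_1+a_{12}e_2$ and $P(e_2)=a_{21}e_1+a_{22}e_2$, I would run through the four pairs $(u,v)\in\{e_1,e_2\}^2$ and extract the resulting polynomial equations in the $a_{ij}$.

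First I would take $u=v=e_1$ with the $\prec$-identity: the left side is $a_{11}^2e_1$, while the right side is $P\bigl(a_{11}e_1+e_1+e_1\bigr)=P((a_{11}+2)e_1)=(a_{11}+2)(a_{11}e_1+a_{12}e_2)$, forcing $a_{11}^2=a_{11}(a_{11}+2)$ and hence $a_{11}=0$, and also $a_{12}(a_{11}+2)=0$, hence $a_{12}=0$. Next, $u=e_1$, $v=e_2$ with the $\succ$-identity: the left side is $a_{11}a_{21}e_2$ and (using $e_1\prec e_2=0$) the right side is $P\bigl(a_{11}e_2+a_{21}e_2\bigr)=(a_{11}+a_{21})(a_{21}e_1+a_{22}e_2)$; with $a_{11}=0$ this gives $0=a_{21}^2$ and $0=a_{21}a_{22}$, so $a_{21}=0$. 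The remaining cases $(e_1,e_1)$ with $\succ$, $(e_2,e_1)$, and $(e_2,e_2)$ either reduce to products that are already zero or, in the one case where $e_1\prec e_1=e_1$ reappears (namely $u=v=e_1$ in the $\succ$-identity, whose right side is $P(0+0+e_1)=P(e_1)=0$ once $a_{11}=a_{12}=0$), are automatically satisfied. It remains to pin down $a_{22}$: I would use $u=e_2$, $v=e_2$, or $u=e_1,v=e_2$ again after substitution, to see that no constraint is actually imposed on $a_{22}$ by the four identities alone\,--- so at this point one must check whether the author's claimed answer $P=0$ is forced, or whether $a_{22}$ is in fact free.

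The main obstacle is exactly this last point: a naive reading of the identities seems to leave $a_{22}$ unconstrained, which would contradict the stated conclusion $P=\begin{pmatrix}0&0\\0&0\end{pmatrix}$. I expect the resolution is that one must also impose that $P$ sends the (nontrivial) products correctly, i.e. recheck the case $u=e_1,v=e_2$ of the $\succ$-identity without prematurely setting terms to zero and track the coefficient of $e_2$: the right side is $a_{22}(a_{11}+a_{21})e_2$, which with $a_{11}=a_{21}=0$ gives $0=0$, still no constraint. So the honest plan is to present the full case table, observe that the only surviving variable is $a_{22}$, and then either (i) conclude that the theorem as stated should read $P=\begin{pmatrix}0&0\\0&a_{22}\end{pmatrix}$, or (ii) if the intended definition of weight-$1$ operator here also requires compatibility forcing $P(e_2)\prec P(e_2)$-type Reynolds-like closure, show $a_{22}=0$ follows. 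I would write the proof so that every one of the four cases is displayed with both identities, make the cancellations explicit, and state clearly which equations yield $a_{11}=a_{12}=a_{21}=0$ and which (if any) yield $a_{22}=0$.
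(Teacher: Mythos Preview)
Your overall plan (case-by-case on basis pairs) is exactly the paper's approach, but the ``obstacle'' you flag is an artifact of an index slip, not a genuine gap in the theorem. In the $(e_1,e_2)$ case of the $\succ$-identity you have interchanged $a_{21}$ and $a_{22}$: since the only nonzero $\succ$-product in $Dend_2^1$ is $e_1\succ e_2=e_2$, one has
\[
P(e_1)\succ P(e_2)=a_{11}a_{22}\,e_2,\qquad e_1\succ P(e_2)=a_{22}\,e_2,
\]
so the right-hand side is $P\bigl((a_{11}+a_{22})e_2\bigr)=(a_{11}+a_{22})(a_{21}e_1+a_{22}e_2)$. Comparing $e_2$-coefficients gives $a_{11}a_{22}=(a_{11}+a_{22})a_{22}$, i.e.\ $a_{22}^{2}=0$, hence $a_{22}=0$. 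This is precisely the missing constraint you were looking for, and it follows from the Rota--Baxter identity alone; no auxiliary ``Reynolds-like'' condition is needed. The paper's proof extracts $a_{22}=0$ from this same case.

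Two smaller points. First, in your $(e_1,e_1)$ $\prec$-case the argument of $P$ on the right should be $(2a_{11}+1)e_1$, not $(a_{11}+2)e_1$, because $e_1\prec P(e_1)=a_{11}e_1$; the conclusions $a_{11}=0$ and $a_{12}=0$ survive unchanged. Second, once the $(e_1,e_2)$ computation is corrected it no longer yields $a_{21}=0$; for that use the $(e_2,e_2)$ case of the $\prec$-identity, where (after $a_{11}=a_{12}=a_{22}=0$) the left side is $P(e_2)\prec P(e_2)=a_{21}^{2}e_1$ while the right side is $P(0)=0$, forcing $a_{21}=0$. With these three corrections the case table closes and gives $P=0$.
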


\begin{proof}
A linear operator $P$ on the 2-dimensional space spanned by
$\{e_1, e_2\}$ can be expressed as:
\begin{align*}
P(e_1) &= a_{11} e_1 + a_{12} e_2, \quad P(e_2) = a_{21} e_1 + a_{22} e_2,
\end{align*}
which corresponds to the matrix:
\begin{align*}
P &=
\begin{array}{cc}
\left(\begin{array}{cc}
a_{11} & a_{12} \\
a_{21} & a_{22}
\end{array}\right).
\end{array}
\end{align*}
We now determine the values of $a_{11}, a_{12}, a_{21},$ and $a_{22}$ that satisfy the Rota–Baxter equations.

\textbf{Case 1.} $u = v = e_1$
From the multiplication rules:
\begin{align*}
e_1 \prec e_1 &= e_1, \quad e_1 \succ e_1 = 0.
\end{align*}
Applying the Rota–Baxter operator equation:
\begin{align*}
P(e_1) \prec P(e_1) &= P\left(P(e_1) \prec e_1 + e_1 \prec P(e_1) + e_1 \prec e_1\right).
\end{align*}
Substituting \(P(e_1) = a_{11} e_1 + a_{12} e_2\), we get:
\begin{align*}
(a_{11} e_1 + a_{12} e_2) \prec (a_{11} e_1 + a_{12} e_2) &= a_{11} e_1.
\end{align*}
The right side becomes:
\begin{align*}
P\left((a_{11} + 1) e_1\right) &= (a_{11} + 1) P(e_1) = (a_{11} + 1)(a_{11} e_1 + a_{12} e_2).
\end{align*}
Thus, we must have $a_{11} = 0$ and $a_{12} = 0$.

\textbf{Case 2.} $u = e_1, v = e_2$
From the multiplication rules:
\begin{align*}
e_1 \prec e_2 &= 0, \quad e_1 \succ e_2 = e_2.
\end{align*}
Applying the Rota–Baxter operator equation:
\begin{align*}
P(e_1) \succ P(e_2) &= P\left(P(e_1) \succ e_2 + e_1 \succ P(e_2) + e_1 \prec e_2\right).
\end{align*}
Substituting $P(e_1) = 0$ and $P(e_2) = c e_1 + d e_2$, we get:
\begin{align*}
0 \succ (a_{21} e_1 + a_{22} e_2) &= 0.
\end{align*}
The right side becomes:
\begin{align*}
P\left(a_{22} e_2\right) &= a_{22} P(e_2) = d(a_{21} e_1 + a_{22} e_2).
\end{align*}
Thus, we must have $a_{21} = 0$ and $a_{22} = 0$.

From the above cases, we have:
\[
a_{11} = a_{12} = a_{21} = a_{22} = 0.
\]
Therefore, the only Rota–Baxter operator of weight $1$ on $Dend_2^{1}$ is the zero operator:
$P =
\begin{array}{cc}
\left(\begin{array}{cc}
0 & 0 \\
0 & 0
\end{array}\right).
\end{array}$
\end{proof}

We present the list of Rota-Baxter operators of weights $0$ and $1$ on the $(Dend_2^{i})_{i=2,\ldots,12}$ algebras.

\begin{center}
\begin{tabular}{|c|c|c|}
\hline
\textbf{Algebra} & \textbf{Rota-Baxter Operators of Weight 0} & \textbf{Restrictions} \\
\hline
$Dend_2^{2}(\alpha)$ &
$P_1=\left(\begin{array}{cc}
 0 & a_{12} \\
0 & a_{22}
\end{array}\right) ,
\quad
P_2 = \left(\begin{array}{cc}
2a_{22} & a_{12} \\
0 & a_{22}
\end{array}\right)$
&\\
\hline
$Dend_2^{3}$ &
$P = \left(\begin{array}{cc}
0 & a_{12} \\
0 & 0
\end{array}\right)$ & $a_{22} \neq 0$  \\
\hline
$Dend_2^{4}$ &
$P_1 = \left(\begin{array}{cc}
a_{11} & 0 \\
0 & 0
\end{array}\right),
\quad
P_2 = \left(\begin{array}{cc}
0 & a_{12} \\
0 & 0
\end{array}\right)$ &$a_{11}, a_{12} \neq 0$ \\
\hline
$Dend_2^{5}$ &
$P = \left(\begin{array}{cc}
a_{11} & a_{12} \\
0 & 0
\end{array}\right)$ & \\
\hline
$Dend_2^{6}$ &
$P = \left(\begin{array}{cc}
0 & 0 \\
0 & a_{22}
\end{array}\right)$ & \\
\hline
$Dend_2^{7}$ &
$P_1 = \left(\begin{array}{cc}
a_{11} & 0 \\
0 & 0
\end{array}\right),
\quad
P_2 = \left(\begin{array}{cc}
a_{12} & a_{12} \\
0 & 0
\end{array}\right)$ & $a_{11}, a_{12} \neq 0$ \\
\hline
$Dend_2^{8}$ &
$P = \left(\begin{array}{cc}
-a_{21} & -a_{21} \\
a_{21} & a_{21}
\end{array}\right)$ & \\
\hline
$Dend_2^{9}$ &
$P = \left(\begin{array}{cc}
-a_{22} & -a_{22} \\
a_{22} & a_{22}
\end{array}\right)$ & \\
\hline
$Dend_2^{10}$ &
$P = \left(\begin{array}{cc}
a_{12} & a_{12} \\
0 & 0
\end{array}\right)$ & \\
\hline
$Dend_2^{11}$ &
$P_1 = \left(\begin{array}{cc}
a_{11} & 0 \\
0 & 0
\end{array}\right),
\quad
P_2 = \left(\begin{array}{cc}
0 & a_{12} \\
0 & 0
\end{array}\right)$ & $a_{11}, a_{12} \neq 0$ \\
\hline
$Dend_2^{12}$ &
$P_1 = \left(\begin{array}{cc}
0 & 0 \\
a_{21} & 0
\end{array}\right),
\quad
P_2 = \left(\begin{array}{cc}
0 & a_{12} \\
0 & 0
\end{array}\right)$ & $a_{21}, a_{12} \neq 0$ \\
\hline
\end{tabular}
\end{center}
\begin{center}
\begin{tabular}{|c|c|c|}
\hline
\textbf{Algebra} & \textbf{Rota-Baxter Operators of Weight 1} & \textbf{Restrictions} \\
\hline
$Dend_2^{2}(\alpha)$ &
$P = \left(\begin{array}{cc}
0 & -a_{22} \\
0 & a_{22}
\end{array}\right)$ & $a_{22} \neq 0$ \\
\hline
$Dend_2^{3}$ &
$P = \left(\begin{array}{cc}
-a_{21} & -a_{21} \\
a_{21} & a_{21}
\end{array}\right)$ &$a_{21} \neq 0$ \\
\hline
$Dend_2^{4}$ &
$P = \left(\begin{array}{cc}
0 & 0 \\
0 & 0
\end{array}\right)$ & \\
\hline
$Dend_2^{5}$ &
$P = \left(\begin{array}{cc}
-a_{21} & -a_{21} \\
a_{21} & a_{21}
\end{array}\right)$ & $a_{21} \neq 0$ \\
\hline
$Dend_2^{6}$ &
$P = \left(\begin{array}{cc}
0 & 0 \\
0 & 0
\end{array}\right)$ & \\
\hline
$Dend_2^{7}$ &
$P = \left(\begin{array}{cc}
0 & 0 \\
0 & 0
\end{array}\right)$ & \\
\hline
$Dend_2^{8}$ &
$P = \left(\begin{array}{cc}
0 & 0 \\
0 & 0
\end{array}\right)$ & \\
\hline
$Dend_2^{9}$ &
$P = \left(\begin{array}{cc}
0 & 0 \\
0 & 0
\end{array}\right)$ & \\
\hline
$Dend_2^{10}$ &
$P = \left(\begin{array}{cc}
0 & 0 \\
0 & 0
\end{array}\right)$ & \\
\hline
$Dend_2^{11}$ &
$P = \left(\begin{array}{cc}
0 & 0 \\
0 & 0
\end{array}\right)$ & \\
\hline
$Dend_2^{12}$ &
$P_1 = \left(\begin{array}{cc}
0 & 0 \\
0 & 0
\end{array}\right)$ & \\
\hline
\end{tabular}
\end{center}

\subsection{Reynolds operator}
\begin{thm}
All Reynolds operators on the $2$-dimensional dendriform algebra $Dend_2^{1}$ are listed below:
$P_1 =
\left(
\begin{array}{cc}
0 & a_{12} \\
0 & 0
\end{array}
\right), \quad
P_2 =
\left(
\begin{array}{cc}
a_{11} & 0 \\
0 & 0
\end{array}
\right), \quad
P_3 =
\left(
\begin{array}{cc}
a_{22} & 0 \\
0 & a_{22}
\end{array}
\right)$.
\end{thm}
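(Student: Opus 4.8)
The plan is to proceed exactly as in the weight-$0$ Rota--Baxter computation for $Dend_2^1$, but now using the Reynolds identities \eqref{eq6}--\eqref{eq7}. Write $P(e_1)=a_{11}e_1+a_{12}e_2$ and $P(e_2)=a_{21}e_1+a_{22}e_2$, and recall from the algebra $Dend_2^1$ that the only nonzero products are $e_1\prec e_1=e_1$ and $e_1\succ e_2=e_2$; all other products of basis elements vanish. I would then substitute the four pairs $(u,v)\in\{(e_1,e_1),(e_1,e_2),(e_2,e_1),(e_2,e_2)\}$ into both \eqref{eq6} and \eqref{eq7} and extract polynomial constraints on the $a_{ij}$.

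First I would handle the $\prec$-relation \eqref{eq6}. For $u=v=e_1$: the left side is $(a_{11}e_1+a_{12}e_2)\prec(a_{11}e_1+a_{12}e_2)=a_{11}^2 e_1$, and the right side is $P\big(a_{11}e_1+a_{11}e_1-a_{11}^2 e_1\big)=P\big((2a_{11}-a_{11}^2)e_1\big)=(2a_{11}-a_{11}^2)(a_{11}e_1+a_{12}e_2)$. Comparing $e_1$- and $e_2$-coefficients gives $a_{11}^2=2a_{11}^2-a_{11}^3$ and $0=(2a_{11}-a_{11}^2)a_{12}$, i.e. $a_{11}^2(a_{11}-1)=0$ and $a_{11}(2-a_{11})a_{12}=0$. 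For the pairs involving $e_2$ on the right of $\prec$ (namely $(e_1,e_2)$ and $(e_2,e_2)$) the left sides vanish since nothing $\prec$-multiplies into a nonzero element except $e_1\prec e_1$; for $(e_2,e_1)$ one similarly gets an identically satisfied or mild constraint. Then I would repeat with the $\succ$-relation \eqref{eq7}, where the governing product is $e_1\succ e_2=e_2$: the pair $(e_1,e_2)$ yields the substantive equation $a_{11}a_{21}e_2 = P\big(a_{11}e_2 + a_{21}e_2 - a_{11}a_{21}e_2\big)$ (using $P(e_1)\succ e_2 = a_{11}e_2$, $e_1\succ P(e_2)=a_{21}e_2$, $P(e_1)\succ P(e_2)=a_{11}a_{21}e_2$), giving $a_{11}a_{21}=(a_{11}+a_{21}-a_{11}a_{21})a_{22}$ together with a coefficient condition forcing the $e_1$-component, and the other pairs give vanishing or redundant relations.

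The combinatorial heart of the argument is then to solve this small polynomial system. From $a_{11}^2(a_{11}-1)=0$ we split into $a_{11}=0$ and $a_{11}=1$; I expect the $a_{11}=1$ branch to be killed (or forced into $P_2$/$P_3$) by the remaining coefficient equations, and the $a_{11}=0$ branch to split further according to whether $a_{12}$, $a_{21}$, $a_{22}$ vanish. Tracking these cases should produce precisely the three families $P_1$ (with $a_{11}=a_{21}=a_{22}=0$, $a_{12}$ free), $P_2$ (with $a_{12}=a_{21}=a_{22}=0$, $a_{11}$ free — note $a_{11}=0$ or $a_{11}=1$ may both appear but are absorbed into "$a_{11}$ arbitrary" only if the equations permit it; I would double-check this), and $P_3$ (the scalar operator $a_{11}=a_{22}$, $a_{12}=a_{21}=0$).

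The main obstacle I anticipate is bookkeeping rather than depth: making sure no spurious solution is dropped and, conversely, verifying that each of the three listed matrices genuinely satisfies both \eqref{eq6} and \eqref{eq7} for \emph{all} $u,v$ (a quick back-substitution), and in particular confirming whether the parameters in $P_2$ and $P_3$ are truly unrestricted or whether hidden quadratic conditions (like $a_{11}^2=a_{11}$) secretly pin them down — the statement asserts they are free, so I would reconcile this by checking that the coefficient of $e_1$ on the right-hand side of the $u=v=e_1$ relation already accounts for the apparent constraint. Once the case analysis is organized cleanly, the verification is routine linear algebra over $\mathbb{C}$.
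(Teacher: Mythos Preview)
Your plan matches the paper's proof exactly: represent $P$ by a $2\times 2$ matrix, substitute the basis pairs into the Reynolds identities \eqref{eq6}--\eqref{eq7}, and solve the resulting polynomial constraints by case-splitting on $a_{11}$. Your flagged concern about whether the diagonal parameter in $P_2$ and $P_3$ is genuinely free is apt --- the paper's own Case~1 computation also yields $a_{11}\in\{0,1\}$ rather than $a_{11}$ arbitrary, so the tension you notice lies in the statement itself, not in your method.
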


\begin{proof}
Since  $P$  is a linear map, it can be represented by the matrix:
$P =
\left(
\begin{array}{cc}
a_{11} & a_{12} \\
a_{21} & a_{22}
\end{array}
\right)$,
where  $P(e_1) = a_{11} e_1 + a_{21} e_2$  and  $P(e_2) = a_{12} e_1 + a_{22} e_2 $.

\textbf{Case 1.}  $u = v = e_1$
   \begin{align*}
   P(e_1) &= a_{11} e_1 + a_{21} e_2.
   \end{align*}
   Then, we have
   \begin{align*}
   P(e_1) \prec P(e_1) &= (a_{11} e_1 + a_{21} e_2) \prec (a_{11} e_1 + a_{21} e_2) = a_{11}^2 e_1.
  \end{align*}
   Now, apply the Reynolds condition:
   \begin{align*}
   &P(e_1 \prec P(e_1) + P(e_1) \prec e_1 - P(e_1) \prec P(e_1)).
   \end{align*}
   Simplifying gives:
   \begin{align*}
    e_1 \prec P(e_1) &= a_{11} e_1,\\
    P(e_1) \prec e_1 &= a_{11} e_1,\\
    P(e_1) \prec P(e_1) &= a_{11}^2 e_1.
  \end{align*}
   Thus, the expression inside \( P \) becomes:
   \begin{align*}
   &a_{11} e_1 + a_{11} e_1 - a_{11}^2 e_1 = (2a_{11} - a_{11}^2) e_1.
   \end{align*}
   Applying $P$:
   \begin{align*}
   P((2a_{11} - a_{11}^2) e_1) &= (2a_{11} - a_{11}^2) P(e_1) = (2a_{11} - a_{11}^2)(a_{11} e_1 + a_{21} e_2).
   \end{align*}
   For equality to hold with the left side $a_{11}^2 e_1$, we require:
   \begin{align*}
   a_{21} &= 0 \quad \text{and} \quad a_{11}^2 = 2a_{11} - a_{11}^2.
   \end{align*}
   This gives $a_{11} = 0$  or  $a_{11} = 1$.

\textbf{Case 2.}  $u = e_1, v = e_2$
   \begin{align*}
   P(e_1) &= a_{11} e_1, \quad P(e_2) = a_{12} e_1 + a_{22} e_2.
   \end{align*}
   We find:
   \begin{align*}
   P(e_1) \prec P(e_2) &= a_{11} a_{12} e_1.
   \end{align*}
   Using the Reynolds condition:
   \begin{align*}
   &P(e_1 \prec P(e_2) + P(e_1) \prec e_2 - P(e_1) \prec P(e_2)).
   \end{align*}
   After simplification, we find:
   \begin{align*}
   a_{12} &= 0.
   \end{align*}

From the previous conditions, we have:
$a_{21} = 0, a_{12} = 0, a_{11} and a_{22}$
can take arbitrary values.
Thus, the matrices representing Reynolds operators are:
$P_1 =
\begin{pmatrix}
0 & a_{12} \\
0 & 0
\end{pmatrix}, \quad
P_2 =
\begin{pmatrix}
a_{11} & 0 \\
0 & 0
\end{pmatrix}, \quad
P_3 =
\begin{pmatrix}
a_{22} & 0 \\
0 & a_{22}
\end{pmatrix}$.
\end{proof}

The Reynolds operators on the algebras $(Dend_2^{i})_{2,\ldots,12}$ are listed below.
\begin{center}
\begin{tabular}{|c|c|c|}
\hline
\textbf{Algebra} & \textbf{Reynolds Operators} & \textbf{Restrictions} \\
\hline
$Dend_2^{2}$ &
$P_1 =
\left(
\begin{array}{cc}
0 & a_{12} \\
0 & a_{22}
\end{array}
\right),
\quad
P_2 =
\left(
\begin{array}{cc}
a_{22} & a_{12} \\
0 & a_{22}
\end{array}
\right)$
&  $a_{12}, a_{22} \neq 0$ \\
\hline

$Dend_2^{3}$ &
$P_1 =
\left(
\begin{array}{cc}
a_{11} & a_{12} \\
0 & 0
\end{array}
\right),
\quad
P_2 =
\left(
\begin{array}{cc}
a_{22} & 0 \\
0 & a_{22}
\end{array}
\right)$
&  $a_{22} \neq 0$  \\
\hline

$Dend_2^{4}$ &
$P_1 =
\left(
\begin{array}{cc}
0 & a_{12} \\
0 & 0
\end{array}
\right),
\quad
P_2 =
\left(
\begin{array}{cc}
a_{11} & 0 \\
0 & 0
\end{array}
\right),
\quad
P_3 =
\left(
\begin{array}{cc}
a_{22} & 0 \\
0 & a_{22}
\end{array}
\right)$
& $a_{22} \neq 0$ \\
\hline

$Dend_2^{5}$ &
$P_1 =
\left(
\begin{array}{cc}
a_{11} & a_{12} \\
0 & 0
\end{array}
\right),
\quad
P_2 =
\left(
\begin{array}{cc}
a_{22} & 0 \\
0 & a_{22}
\end{array}
\right)$
& $a_{22} \neq 0$ \\
\hline

$Dend_2^{6}$ &
$P =
\left(
\begin{array}{cc}
a_{11} & 0 \\
0 & a_{22}
\end{array}
\right)$ & \\
\hline

$Dend_2^{7}$ &
$P_1 =
\left(
\begin{array}{cc}
a_{22} & 0 \\
0 & a_{22}
\end{array}
\right),
\quad
P_2 =
\left(
\begin{array}{cc}
a_{11} & 0 \\
0 & 0
\end{array}
\right),
\quad
P_3 =
\left(
\begin{array}{cc}
a_{12} & a_{12} \\
0 & 0
\end{array}
\right)$
& $a_{12} \neq 0$ \\
\hline

$Dend_2^{8}$ &
$P_1 =
\left(
\begin{array}{cc}
a_{11} & a_{11} \\
0 & 0
\end{array}
\right),
\quad
P_2 =
\left(
\begin{array}{cc}
a_{22} & 0 \\
0 & a_{22}
\end{array}
\right)$
& $a_{22} \neq 0$ \\
\hline

$Dend_2^{9}$ &
$P_1 =
\left(
\begin{array}{cc}
a_{22} & 0 \\
0 & a_{22}
\end{array}
\right),
\quad
P_2 =
\left(
\begin{array}{cc}
a_{11} & 0 \\
0 & 0
\end{array}
\right),
\quad
P_3 =
\left(
\begin{array}{cc}
a_{12} & a_{12} \\
0 & 0
\end{array}
\right)$
& $a_{12} \neq 0$ \\
\hline

$Dend_2^{10}$ &
$P_1 =
\left(
\begin{array}{cc}
a_{11} & a_{11} \\
0 & 0
\end{array}
\right),
\quad
P_2 =
\left(
\begin{array}{cc}
a_{22} & 0 \\
0 & a_{22}
\end{array}
\right)$
& $a_{22} \neq 0$ \\
\hline

$Dend_2^{11}$ &
$P_1 =
\left(
\begin{array}{cc}
a_{22} & 0 \\
0 & a_{22}
\end{array}
\right),
\quad
P_2 =
\left(
\begin{array}{cc}
0 & a_{12} \\
0 & 0
\end{array}
\right),
\quad
P_3 =
\left(
\begin{array}{cc}
a_{11} & 0 \\
0 & 0
\end{array}
\right)$
& $a_{11} \neq 0$ \\
\hline

$Dend_2^{12}$ &
$P_1 =
\left(
\begin{array}{cc}
a_{22} & 0 \\
0 & a_{22}
\end{array}
\right),
\quad
P_2 =
\left(
\begin{array}{cc}
0 & a_{12} \\
0 & 0
\end{array}
\right),
\quad
P_3 =
\left(
\begin{array}{cc}
a_{11} & 0 \\
0 & 0
\end{array}
\right)$
& $a_{11} \neq 0$ \\
\hline
\end{tabular}
\end{center}

\subsection{Nijenhuis Operator}

\begin{thm}
All Nijenhuis operators on the $2$-dimensional dendriform algebra
$Dend_2^{1}$ are listed below:
\begin{align*}
P_1 = \begin{array}{cc}
\left(
\begin{array}{cc}
a_{11} & 0 \\
0 & a_{22}
\end{array}
\right)
\end{array}
, \quad
P_2 = \begin{array}{cc}
\left(
\begin{array}{cc}
a_{22} & a_{12} \\
0 & a_{22}
\end{array}
\right).
\end{array}
\end{align*}
\end{thm}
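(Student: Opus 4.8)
The plan is to impose the Nijenhuis identities \eqref{eq8}–\eqref{eq9} on the generic matrix $P$ and solve the resulting polynomial system, exactly in the style of the Rota–Baxter and Reynolds computations above. Write $P(e_1)=a_{11}e_1+a_{21}e_2$ and $P(e_2)=a_{12}e_1+a_{22}e_2$, and recall that the only nonzero products in $Dend_2^1$ are $e_1\prec e_1=e_1$ and $e_1\succ e_2=e_2$. Since there are two operations $\prec,\succ$ and four choices of $(u,v)\in\{e_1,e_2\}^2$, there are eight scalar equations to extract, though most of them will be trivial because the structure constants vanish on the relevant inputs.

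First I would treat the $\prec$-equation \eqref{eq8}. The only way to get a nonzero left side $P(u)\prec P(v)$ is to pick off the $e_1$-components, so the case $u=v=e_1$ is the governing one: the left side is $a_{11}^2 e_1$, while the right side is $P\bigl(P(e_1)\prec e_1 + e_1\prec P(e_1) - P(e_1\prec e_1)\bigr)=P\bigl(a_{11}e_1+a_{11}e_1-a_{11}e_1\bigr)=P(a_{11}e_1)=a_{11}(a_{11}e_1+a_{21}e_2)$. Matching $e_1$- and $e_2$-coefficients gives $a_{11}^2=a_{11}^2$ (automatic) and $a_{11}a_{21}=0$. The remaining $\prec$-cases ($u$ or $v$ equal to $e_2$) force at most the same kind of constraint, and I would check they add nothing new beyond possibly reconfirming $a_{11}a_{21}=0$.

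Next I would handle the $\succ$-equation \eqref{eq9}. The nonzero products $e_1\succ e_2=e_2$ mean the live case is $u=e_1,v=e_2$: the left side is $P(e_1)\succ P(e_2)=(a_{11}e_1+a_{21}e_2)\succ(a_{12}e_1+a_{22}e_2)=a_{11}a_{22}e_2$, while the right side is $P\bigl(P(e_1)\succ e_2+e_1\succ P(e_2)-P(e_1\succ e_2)\bigr)=P\bigl(a_{11}e_2+a_{22}e_2-a_{22}e_2\bigr)=P(a_{11}e_2)=a_{11}(a_{12}e_1+a_{22}e_2)$. Comparing coefficients yields $a_{11}a_{12}=0$ (from $e_1$) and $a_{11}a_{22}=a_{11}a_{22}$ (automatic). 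The cases $u=e_2$ (any $v$) and $u=e_1,v=e_1$ should be checked to be trivial.

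The accumulated constraints are therefore $a_{11}a_{21}=0$ and $a_{11}a_{12}=0$, with $a_{22}$ unconstrained; so I would split into the two branches dictated by these products. If $a_{11}=0$: but then to recover the stated families one needs an extra relation — so here I expect the genuinely subtle point, namely that I have probably under-counted equations and one of the "trivial" $\succ$ or $\prec$ subcases in fact forces $a_{12}a_{21}$ or a link between $a_{11}$ and $a_{22}$; I would recompute those subcases carefully (this is the main obstacle, since the published answer has $a_{11}=a_{22}$ in $P_2$ and $a_{21}=0$ throughout, which is stronger than $a_{11}a_{12}=a_{11}a_{21}=0$ alone). Assuming the full system does collapse to $a_{21}=0$ together with ($a_{12}=0$) or ($a_{11}=a_{22}$), the solution set is exactly
\begin{align*}
P_1=\left(\begin{array}{cc} a_{11} & 0 \\ 0 & a_{22}\end{array}\right),\qquad
P_2=\left(\begin{array}{cc} a_{22} & a_{12} \\ 0 & a_{22}\end{array}\right),
\end{align*}
and I would finish by confirming directly that every matrix of these two shapes does satisfy \eqref{eq8}–\eqref{eq9}, closing the classification.
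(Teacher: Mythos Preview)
Your overall plan coincides with the paper's: write $P$ as a generic matrix and solve the system coming from \eqref{eq8}--\eqref{eq9}. The trouble is not where you suspect it (the ``dismissed'' subcases), but in the two computations you actually carried out. In both, you mishandled the last term of the Nijenhuis identity, which is $P(u\ast v)$, i.e.\ $P$ applied to a \emph{basis vector}, hence a full two-component expression.

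Concretely, in the $(u,v)=(e_1,e_1)$ case for $\prec$ you wrote $P(e_1\prec e_1)=a_{11}e_1$, but $P(e_1\prec e_1)=P(e_1)=a_{11}e_1+a_{21}e_2$. The corrected inner expression is $a_{11}e_1-a_{21}e_2$, and applying $P$ gives
\[
(a_{11}^2-a_{12}a_{21})\,e_1+a_{21}(a_{11}-a_{22})\,e_2,
\]
so comparison with $a_{11}^2e_1$ yields $a_{12}a_{21}=0$ \emph{and} $a_{21}(a_{11}-a_{22})=0$; the second relation is exactly the ``missing link'' between $a_{11}$ and $a_{22}$ you were looking for. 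Likewise, in the $(e_1,e_2)$ case for $\succ$ you wrote $P(e_1\succ e_2)=a_{22}e_2$, but it is $P(e_2)=a_{12}e_1+a_{22}e_2$; fixing this, the $e_1$-coefficient gives $a_{12}a_{21}=0$ rather than your $a_{11}a_{12}=0$.

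Finally, the case $u=v=e_2$ for $\prec$ is \emph{not} trivial: the left side is $P(e_2)\prec P(e_2)=a_{12}^2\,e_1$, while every term on the right vanishes (since $e_2\prec e_i=e_i\prec e_2=0$), forcing $a_{12}=0$. With $a_{12}=0$ and $a_{21}(a_{11}-a_{22})=0$ in hand (your indexing), a short check shows the remaining six cases impose nothing new, and the two branches $a_{21}=0$ versus $a_{11}=a_{22}$ are precisely the two families in the statement (after reconciling your index convention $P(e_1)=a_{11}e_1+a_{21}e_2$ with the paper's matrix convention $P(e_1)=a_{11}e_1+a_{12}e_2$).
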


\begin{proof}
Let $P$ be a linear map defined on  $Dend_2^{1}$ such that:
$P =
\begin{array}{cc}
\left(
\begin{array}{cc}
a_{11} & a_{12} \\
a_{21} & a_{22}
\end{array}
\right)
\end{array}$.

We will analyze  $P(e_1)$ and  $P(e_2)$:

\begin{align*}
P(e_1) &= a_{11} e_1 + a_{12} e_2, \\
P(e_2) &= a_{21} e_1 + a_{22} e_2.
\end{align*}
Applying the Nijenhuis conditions, we calculate $P(e_1) \prec P(e_1)$:
Using $P(e_1)$ and $P(e_1)$:

$P(e_1) \prec P(e_1) = P(P(e_1) \prec e_1 + e_1 \prec P(e_1) - P(e_1 \prec e_1)$.

The left-hand side becomes:

$P(e_1) \prec P(e_1) = P(e_1)$.

The right-hand side is:

$P(P(e_1) \prec e_1 + e_1 \prec P(e_1) - P(e_1 \prec e_1) = P(e_1 \prec e_1) = P(e_1).$

This implies that the structure must preserve the values leading to either \( P_1 \) or \( P_2 \).\\

Calculate $P(e_1) \succ P(e_2)$

Using \( P(e_1) \) and \( P(e_2) \):

$P(e_1) \succ P(e_2) = P(P(e_1) \succ e_2 + e_1 \succ P(e_2) - P(e_1 \succ e_2)$.

Analyzing both sides leads to similar implications as shown above.

Identify the possible forms of \( P \):
From these conditions, we deduce that:

1. If \( a_{12} \neq 0 \) and \( a_{21} = 0 \), we obtain the form \( P_2 \).

2. If \( a_{21} \neq 0 \) and \( a_{12} = 0 \), we obtain the form \( P_1 \).

This completes the proof.
\end{proof}

The Nijenhuis operators on the algebras $(Dend_2^{i})_{2,\ldots,12}$ are listed below.

\begin{center}
\begin{tabular}{|c|c|c|}
\hline
\textbf{Algebra} & \textbf{Nijenhuis Operators} & \textbf{Restrictions} \\
\hline

$Dend_2^{2}(\alpha)$ &
$P=\begin{array}{cc}
\left(
\begin{array}{cc}
a_{22} & a_{12} \\
0 & a_{22}
\end{array}
\right)
\end{array}$ & \\
\hline

$Dend_2^{3}$ &
$P=\begin{array}{cc}
\left(
\begin{array}{cc}
a_{11} & a_{12} \\
a_{21} & a_{22}
\end{array}
\right)
\end{array}$ & \\
\hline

$Dend_2^{4}$  &
$P_1\begin{array}{cc}
\left(
\begin{array}{cc}
a_{11} & 0 \\
0 & a_{22}
\end{array}
\right)
\end{array},
\quad
P_2=\begin{array}{cc}
\left(
\begin{array}{cc}
a_{22} & a_{12} \\
0 & a_{22}
\end{array}
\right)
\end{array}$ & $a_{12} \neq 0$ \\
\hline

$Dend_2^{5}$ &
$P=\begin{array}{cc}
\left(
\begin{array}{cc}
a_{11} & a_{12} \\
a_{21} & a_{22}
\end{array}
\right)
\end{array}$ & \\
\hline

$Dend_2^{6}$ &
$P=\begin{array}{cc}
\left(
\begin{array}{cc}
a_{11} & 0 \\
0 & a_{22}
\end{array}
\right)
\end{array}$ & \\
\hline

$Dend_2^{7}$  &
$P_2=\begin{array}{cc}
\left(
\begin{array}{cc}
a_{12} + a_{22} & a_{12} \\
0 & a_{22}
\end{array}
\right)
\end{array},
\quad P_2=\begin{array}{cc}
\left(
\begin{array}{cc}
a_{11} & 0 \\
0 & a_{22}
\end{array}
\right)
\end{array},
\quad P_3=\begin{array}{cc}
\left(
\begin{array}{cc}
-a_{21} + a_{22} & 0 \\
a_{21} & a_{22}
\end{array}
\right)
\end{array}$ & $a_{12} \neq 0$ \\
\hline

$Dend_2^{8}$  &
$P=\begin{array}{cc}
\left(
\begin{array}{cc}
a_{12} + a_{22} & a_{12} \\
0 & a_{22}
\end{array}
\right)
\end{array}$ & \\
\hline

$Dend_2^{9}$  &
$P_1=\begin{array}{cc}
\left(
\begin{array}{cc}
a_{12} + a_{22} & a_{12} \\
0 & a_{22}
\end{array}
\right)
\end{array},
\quad P_2=\begin{array}{cc}
\left(
\begin{array}{cc}
a_{11} & 0 \\
0 & a_{22}
\end{array}
\right)
\end{array},
\quad P_3=\begin{array}{cc}
\left(
\begin{array}{cc}
-a_{21} + a_{22} & 0 \\
a_{21} & a_{22}
\end{array}
\right)
\end{array}$ & $a_{12} \neq 0$ \\
\hline

$Dend_2^{10}$ &
$P=\begin{array}{cc}
\left(
\begin{array}{cc}
a_{12} + a_{22} & a_{12} \\
0 & a_{22}
\end{array}
\right)
\end{array}$ & \\
\hline

$Dend_2^{11}$  &
$P_2=\begin{array}{cc}
\left(
\begin{array}{cc}
a_{11} & 0 \\
0 & a_{22}
\end{array}
\right)
\end{array},
\quad P_2=\begin{array}{cc}
\left(
\begin{array}{cc}
a_{22} & a_{12} \\
0 & a_{22}
\end{array}
\right)
\end{array}$ & $a_{12} \neq 0$ \\
\hline

$Dend_2^{12}$  &
$P_1=\begin{array}{cc}
\left(
\begin{array}{cc}
a_{11} & 0 \\
0 & a_{22}
\end{array}
\right)
\end{array},
\quad P_2=\begin{array}{cc}
\left(
\begin{array}{cc}
a_{22} & a_{12} \\
0 & a_{22}
\end{array}
\right)
\end{array}$ & $a_{12} \neq 0$ \\
\hline
\end{tabular}
\end{center}
\subsection{Averaging Operator}

\begin{thm}\label{thm1}
All averaging operators on the $2$-dimensional dendriform algebra
$Dend_2^{1}$ are listed below:
$P_1 =
\begin{array}{cc}
\left(
\begin{array}{cc}
0 & a_{12} \\
0 & 0
\end{array}
\right),
\quad
P_2 =
\left(
\begin{array}{cc}
a_{11} & 0 \\
0 & 0
\end{array}
\right),
\quad
P_3 =
\left(
\begin{array}{cc}
a_{22} & 0 \\
0 & a_{22}
\end{array}
\right).
\end{array}$
\end{thm}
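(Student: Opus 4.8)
The plan is to determine, directly from the defining equations \eqref{eq10} and \eqref{eq11}, all linear maps $P=\left(\begin{smallmatrix}a_{11}&a_{12}\\a_{21}&a_{22}\end{smallmatrix}\right)$ that are averaging operators on $Dend_2^{1}$, where $P(e_1)=a_{11}e_1+a_{12}e_2$ and $P(e_2)=a_{21}e_1+a_{22}e_2$. The only nonzero products in $Dend_2^{1}$ are $e_1\prec e_1=e_1$ and $e_1\succ e_2=e_2$, so there are effectively two families of identities to check: the $\prec$-identities on the pair $(e_1,e_1)$ and the $\succ$-identities on the pair $(e_1,e_2)$; all other choices of $(u,v)$ give $0=0$ on both sides because the relevant products vanish. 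I would organize the proof as a short case analysis over these generating pairs.

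\textbf{Step 1 (the $\prec$-relation on $(e_1,e_1)$).} Compute $P(e_1)\prec P(e_1)=(a_{11}e_1+a_{12}e_2)\prec(a_{11}e_1+a_{12}e_2)=a_{11}^2e_1$. Next compute $P\big(e_1\prec P(e_1)\big)=P(a_{11}e_1)=a_{11}(a_{11}e_1+a_{12}e_2)$ and $P\big(P(e_1)\prec e_1\big)=P(a_{11}e_1)=a_{11}(a_{11}e_1+a_{12}e_2)$. Equation \eqref{eq10} forces $a_{11}^2e_1=a_{11}a_{11}e_1+a_{11}a_{12}e_2$, i.e. $a_{11}a_{12}=0$. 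So either $a_{12}=0$ or $a_{11}=0$.

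\textbf{Step 2 (the $\succ$-relation on $(e_1,e_2)$).} Compute $P(e_1)\succ P(e_2)=(a_{11}e_1+a_{12}e_2)\succ(a_{21}e_1+a_{22}e_2)=a_{11}a_{22}e_2$. Then $P\big(e_1\succ P(e_2)\big)=P(a_{22}e_2)=a_{22}(a_{21}e_1+a_{22}e_2)$ and $P\big(P(e_1)\succ e_2\big)=P(a_{11}e_2)=a_{11}(a_{21}e_1+a_{22}e_2)$. Equation \eqref{eq11} then gives two requirements: first $a_{11}a_{22}e_2=a_{22}a_{21}e_1+a_{22}^2e_2$, forcing $a_{22}a_{21}=0$ and $a_{11}a_{22}=a_{22}^2$; and second $a_{22}(a_{21}e_1+a_{22}e_2)=a_{11}(a_{21}e_1+a_{22}e_2)$, forcing $(a_{22}-a_{11})a_{21}=0$ and $(a_{22}-a_{11})a_{22}=0$. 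One should also note the $\succ$-relation on $(e_1,e_1)$ and the $\prec$-relation on other pairs are automatically $0=0$; the only remaining check is the $\prec$-relation on $(e_1,e_2)$, $(e_2,e_1)$, $(e_2,e_2)$ and the $\succ$-relation on $(e_2,\cdot)$, all of which vanish identically since $e_2\prec(\text{anything})=0$, $(\text{anything})\prec e_2=0$ except through $e_1\prec e_1$, and $e_2\succ(\text{anything})=0$.

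\textbf{Step 3 (assemble the solution set).} Combining Steps 1 and 2: from $a_{22}(a_{22}-a_{11})=0$ we split into $a_{22}=0$ or $a_{22}=a_{11}$. If $a_{22}=a_{11}=:a$ then $a_{21}=0$ (from $a_{22}a_{21}=0$ when $a\neq0$, and trivially when $a=0$) and $a_{12}=0$ (from Step 1, since if $a\neq0$ then $a_{12}=0$), giving the family $P_3=\left(\begin{smallmatrix}a&0\\0&a\end{smallmatrix}\right)$. If $a_{22}=0$: then Step 1 says $a_{11}a_{12}=0$, so either $a_{11}=0$, giving $P_1=\left(\begin{smallmatrix}0&a_{12}\\0&0\end{smallmatrix}\right)$ after checking $a_{21}=0$ is forced (it is, since with $a_{22}=0$ the condition $(a_{22}-a_{11})a_{21}=0$ is $-a_{11}a_{21}=0$, automatic when $a_{11}=0$; but one must separately verify $a_{21}$ does not reappear — indeed $P(e_2)\prec P(e_1)$ etc. all vanish, so $a_{21}$ stays free only if no relation constrains it; here the relation $a_{22}a_{21}=0$ is vacuous and $(a_{22}-a_{11})a_{21}=0$ with $a_{11}=0$ is vacuous, so strictly $a_{21}$ would be free — I must recheck whether the $\prec$-relation on $(e_2,e_1)$ or $(e_1,\cdot)$ actually produces a constraint killing $a_{21}$), or $a_{12}=0$ with $a_{11}$ free and $a_{11}a_{21}=0$ forcing $a_{21}=0$ once $a_{11}\neq0$, giving $P_2=\left(\begin{smallmatrix}a_{11}&0\\0&0\end{smallmatrix}\right)$.

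\textbf{Main obstacle.} The delicate point, flagged in Step 3, is pinning down $a_{21}$: a naive reading of the generating-pair identities leaves $a_{21}$ apparently unconstrained in the branch $a_{11}=a_{22}=0$, yet the claimed answer has $P_1$ with lower-left entry $0$. I expect the resolution is that the $\prec$-identity applied to $(e_2,e_1)$ — namely $P(e_2)\prec P(e_1)=(a_{21}e_1+a_{22}e_2)\prec(a_{11}e_1+a_{12}e_2)$, whose left side is $a_{21}a_{11}e_1$ — together with $P(e_2\prec P(e_1))=P(a_{21}a_{11}e_1)$ and $P(P(e_2)\prec e_1)=P(a_{21}a_{11}e_1+a_{22}a_{11}e_1)$ does force a relation among $a_{11},a_{21},a_{22}$ that, combined with the others, yields $a_{21}=0$ in every surviving branch. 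So the careful execution of this secondary case — making sure every one of the (finitely many) generator-pair substitutions has been used and that $a_{21}=0$ genuinely follows rather than being imposed by fiat — is the crux; everything else is routine bookkeeping, after which one reads off exactly $P_1,P_2,P_3$.
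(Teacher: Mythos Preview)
Your overall strategy---write $P$ as a $2\times 2$ matrix, impose \eqref{eq10}--\eqref{eq11} on all basis pairs, and solve---is sound and is in fact more ambitious than the paper's own argument, which merely \emph{verifies} that each of $P_1,P_2,P_3$ satisfies the averaging identities and never shows these exhaust the solutions. So your approach is the right one for an actual classification.

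However, there is a genuine gap, and you have already put your finger on it in your ``Main obstacle'': the entry $a_{21}$ is not yet constrained, and your proposed fix does not work. The assertion at the end of Step~2 that the $\prec$-identity on $(e_1,e_2),(e_2,e_1),(e_2,e_2)$ ``vanish identically'' is circular: it is only true once you already know $a_{21}=0$, because $P(e_2)=a_{21}e_1+a_{22}e_2$ has an $e_1$-component that feeds into $e_1\prec e_1$. In particular, for $(u,v)=(e_2,e_2)$ one has
\[
P(e_2)\prec P(e_2)=(a_{21}e_1+a_{22}e_2)\prec(a_{21}e_1+a_{22}e_2)=a_{21}^{2}\,e_1,
\]
while $P\big(e_2\prec P(e_2)\big)=P(0)=0$ and $P\big(P(e_2)\prec e_2\big)=P(0)=0$. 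Hence \eqref{eq10} forces $a_{21}^{2}=0$, i.e.\ $a_{21}=0$. This is the missing relation; the pair $(e_2,e_1)$ you examine in the ``Main obstacle'' only yields $a_{11}a_{21}=0$ and $a_{12}a_{21}=0$, which are vacuous in the problematic branch $a_{11}=a_{12}=0$. (Your computations there also contain slips: $e_2\prec P(e_1)=0$, not $a_{21}a_{11}e_1$, and $P(e_2)\prec e_1=a_{21}e_1$, not $a_{21}a_{11}e_1+a_{22}a_{11}e_1$.) Similarly, the $\succ$-identity on $(e_1,e_1)$ is not automatically zero---it contributes $a_{11}a_{12}=0$ and $a_{12}a_{22}=0$---though these turn out to be redundant once $a_{21}=0$ and your Step~1--2 relations are in place.

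With $a_{21}=0$ in hand, your Step~3 case split ($a_{22}=0$ versus $a_{22}=a_{11}$, combined with $a_{11}a_{12}=0$) goes through cleanly and yields exactly $P_1,P_2,P_3$.
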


\begin{proof}
We will verify each of the proposed averaging operators \(P_1\), \(P_2\), and \(P_3\) against the averaging operator conditions.
\textit{Condition for \( \prec \):}
\[
P_1(u) \prec P_1(v) = P_1(u \prec P_1(v)) = P_1(P_1(u) \prec v)
\]

For \(u = e_1\), \(v = e_2\):
\[
P_1(e_1) = 0, \quad P_1(e_2) = a_{12} e_1
\]
Thus,
\[
P_1(e_1) \prec P_1(e_2) = 0 \prec (a_{12} e_1) = 0
\]
And:
\[
P_1(e_1 \prec P_1(e_2)) = P_1(0) = 0
\]
And:
\[
P_1(P_1(e_1) \prec e_2) = P_1(0) = 0
\]

Both sides are equal, satisfying the condition.

\textit{Condition for \( \succ \):}
\[
P_1(e_1) \succ P_1(e_2) = P_1(e_1 \succ P_1(e_2)) = P_1(P_1(e_1) \succ e_2)
\]

Calculating:
\[
P_1(e_1) \succ P_1(e_2) = 0 \succ (a_{12} e_1) = a_{12} e_2
\]
And:
\[
P_1(e_1 \succ P_1(e_2)) = P_1(a_{12} e_2) = a_{12} P_1(e_2) = a_{12} \cdot 0 = 0
\]
Thus, this condition also holds.

2.
\textit{Condition for \( \prec \):}
Following similar steps:
\[
P_2(e_1) = a_{11} e_1, \quad P_2(e_2) = 0
\]
Thus,
\[
P_2(e_1) \prec P_2(e_2) = (a_{11} e_1) \prec 0 = 0
\]
And:
\[
P_2(e_1 \prec P_2(e_2)) = P_2(0) = 0
\]
And:
\[
P_2(P_2(e_1) \prec e_2) = P_2(a_{11} e_1 \prec e_2) = P_2(0) = 0
\]

Thus, the first condition holds.

\textit{Condition for \( \succ \):}
Calculating:
\[
P_2(e_1) \succ P_2(e_2) = a_{11} e_1 \succ 0 = 0
\]
And:
\[
P_2(e_1 \succ P_2(e_2)) = P_2(0) = 0
\]
Thus, both conditions are satisfied.

3.
\textit{Condition for \( \prec \):}
Calculating:
\[
P_3(e_1) = a_{22} e_1, \quad P_3(e_2) = a_{22} e_2
\]
Thus,
\[
P_3(e_1) \prec P_3(e_2) = (a_{22} e_1) \prec (a_{22} e_2) = a_{22} e_1 \prec a_{22} e_2 = a_{22} e_1
\]
And:
\[
P_3(e_1 \prec P_3(e_2)) = P_3(a_{22} e_1) = a_{22} e_1
\]
And:
\[
P_3(P_3(e_1) \prec e_2) = P_3(a_{22} e_1 \prec e_2) = P_3(0) = 0
\]

Thus, the conditions hold.

\textit{Condition for \( \succ \):}
Calculating:
\[
P_3(e_1) \succ P_3(e_2) = a_{22} e_1 \succ a_{22} e_2 = a_{22} e_2
\]
And:
\[
P_3(e_1 \succ P_3(e_2)) = P_3(a_{22} e_2) = a_{22} e_2
\]
Thus, both conditions hold.
Thus, Theorem \ref{thm1} is proven.
\end{proof}

The averaging operators on the algebras $(Dend_2^{i})_{2,\ldots,12}$ are listed below.

\begin{center}
\begin{tabular}{|c|c|c|}
\hline
\textbf{Algebra} & \textbf{Averaging Operators} & \textbf{Restrictions} \\
\hline

$Dend_2^{2}(\alpha)$ &
$P_1=\begin{array}{cc}
\left(
\begin{array}{cc}
0 & a_{12} \\
0 & a_{22}
\end{array}
\right),
\quad
P_2=\left(
\begin{array}{cc}
a_{22} & a_{12} \\
0 & a_{22}
\end{array}
\right)
\end{array}$ & $a_{22} \neq 0$ \\
\hline

$Dend_2^{3}$ &
$P_1=\begin{array}{cc}
\left(
\begin{array}{cc}
a_{11} & a_{12} \\
0 & 0
\end{array}
\right),
\quad
P_2=\left(
\begin{array}{cc}
a_{22} & 0 \\
0 & a_{22}
\end{array}
\right)
\end{array}$ & $a_{22} \neq 0$ \\
\hline

$Dend_2^{4}$ &
$P_1=\begin{array}{cc}
\left(
\begin{array}{cc}
a_{11} & 0 \\
0 & a_{22}
\end{array}
\right),
\quad
P_2=\left(
\begin{array}{cc}
0 & a_{12} \\
0 & a_{22}
\end{array}
\right)
\end{array}$ & $a_{11} \neq 0, a_{12} \neq 0$ \\
\hline

$Dend_2^{5}$ &
$P_1=\begin{array}{cc}
\left(
\begin{array}{cc}
a_{11} & a_{12} \\
0 & 0
\end{array}
\right),
\quad
P_2=\left(
\begin{array}{cc}
a_{22} & 0 \\
0 & a_{22}
\end{array}
\right)
\end{array}$ & $a_{22} \neq 0$ \\
\hline

$Dend_2^{6}$ &
$P=\left(
\begin{array}{cc}
a_{11} & 0 \\
0 & a_{22}
\end{array}
\right)$ & \\
\hline

$Dend_2^{7}$ &
$P_1=\begin{array}{cc}
\left(
\begin{array}{cc}
a_{22} & 0 \\
0 & a_{22}
\end{array}
\right),
\quad
P_2=\left(
\begin{array}{cc}
a_{11} & 0 \\
0 & 0
\end{array}
\right),
\quad
P_3=\left(
\begin{array}{cc}
a_{12} & a_{12} \\
0 & 0
\end{array}
\right)
\end{array}$ & $a_{12} \neq 0$ \\
\hline

$Dend_2^{8}$ &
$P_1=\begin{array}{cc}
\left(
\begin{array}{cc}
a_{11} & a_{11} \\
a_{22} & a_{22}
\end{array}
\right),
\quad
P_2=\left(
\begin{array}{cc}
a_{22} & 0 \\
0 & a_{22}
\end{array}
\right)
\end{array}$ & $a_{22} \neq 0$ \\
\hline

$Dend_2^{9}$ &
$P_1=\begin{array}{cc}
\left(
\begin{array}{cc}
a_{12} & a_{12} \\
a_{22} & a_{22}
\end{array}
\right),
\quad
P_2=\left(
\begin{array}{cc}
a_{11} & 0 \\
0 & a_{22}
\end{array}
\right)
\end{array}$ & $a_{22} \neq 0$ \\
\hline

$Dend_2^{10}$ &
$P_1=\begin{array}{cc}
\left(
\begin{array}{cc}
0 & 0 \\
0 & a_{22}
\end{array}
\right),
\quad
P_2=\left(
\begin{array}{cc}
a_{11} & 0 \\
0 & a_{11}
\end{array}
\right),
\quad
P_3=\left(
\begin{array}{cc}
a_{12} & a_{12} \\
0 & 0
\end{array}
\right)
\end{array}$ & $a_{12} \neq 0$ \\
\hline

$Dend_2^{11}$ &
$P_1=\begin{array}{cc}
\left(
\begin{array}{cc}
a_{22} & 0 \\
0 & a_{22}
\end{array}
\right),
\quad
P_2=\left(
\begin{array}{cc}
0 & a_{12} \\
0 & 0
\end{array}
\right),
\quad
P_3=\left(
\begin{array}{cc}
a_{11} & 0 \\
0 & 0
\end{array}
\right)
\end{array}$ & $a_{12} \neq 0$ \\
\hline

$Dend_2^{12}$ &
$P_1=\begin{array}{cc}
\left(
\begin{array}{cc}
a_{22} & a_{12} \\
0 & a_{22}
\end{array}
\right),
\quad
P_2=\left(
\begin{array}{cc}
a_{11} & 0 \\
a_{21} & 0
\end{array}
\right)
\end{array}$ & $a_{21} \neq 0$ \\
\hline
\end{tabular}
\end{center}

\textbf{Conflict of Interests:}
The authors have no conflicts of interest to declare that are relevant to the content of this article.
\\\cite{a,b,c,d,e,f,g,h,i,j,k,l,m,n,o,p,q,r,s,t,u,v,w}

\textbf{Acknowledgment:}
We thank the referee for the helpful comments and suggestions that contributed to improving this paper.


\begin{thebibliography}{999}

\bibitem{1} Loday, J. L. (1993). Une version non commutative des algèbres de Lie: les algèbres de Leibniz. \textit{Les Rencontres Physiciens-Mathématiciens de Strasbourg-RCP25}, 44, 127-151.

\bibitem{2} Loday, J. L., Chapoton, F., Frabetti, A., Goichot, F. (2001). Dialgebras (pp. 7-66). \textit{Springer Berlin Heidelberg}.


\bibitem{3} Rikhsiboev, I. M., Rakhimov, I. S., \& Basri, W. I. T. R. I. A. N. Y. (2010). The description of dendriform algebra structures on two-dimensional complex space. \textit{Journal of Algebra, Number Theory: Advances and Applications}, 4(1), 1-18.

\bibitem{4} Alkhezi, Y. A., \& Fiidow, M. A. (2019). On Derivations of Finite Dimensional Dendriform Algebras. Pure Mathematical sciences, 8, 17-24.

\bibitem{5} Alkhezi, Y. A. (2023). ($\rho, \tau, \sigma$)-Derivations of Dendriform Algebras. Applied Mathematics, 14(12), 839-846.

\bibitem{6}Alkhezi, Y. A., \& Fiidow, M. A. (2022). Inner Derivations of Finite Dimensional Dendriform Algebras. In International Mathematical Forum (Vol. 17, No. 4, pp. 163-170).
\bibitem{7}Zoba, M. A. D. (2018). Centroids Of Dendriform Algebras. International Journal of Pure and Applied Mathematics, 119(15), 427-435.

\bibitem{8}Zoba, M. A. D. (2021). Quasi-Centroids Of Dendriform Algebras. PSYCHOLOGY AND EDUCATION, 58(2), 6684-6689.

\bibitem{9}Ebrahimi-Fard, K., Manchon, D., \& Patras, F. (2008). New identities in dendriform algebras. Journal of Algebra, 320(2), 708-727.

\bibitem{10} Ebrahimi-Fard, K., \& Guo, L. (2008). Rota–Baxter algebras and dendriform algebras. \textit{Journal of Pure and Applied Algebra}, 212(2), 320-339.

\bibitem{11} Ebrahimi-Fard, K., \& Guo, L. (2004). Rota-Baxter algebras, dendriform algebras and Poincaré-Birkhoff-Witt theorem. \textit{arXiv preprint}, math.RA/0503342.

\bibitem{12} Ebrahimi-Fard, K., \& Guo, L. (2004). Rota-Baxter algebras, dendriform algebras and Poincaré-Birkhoff-Witt theorem. arXiv preprint math.RA/0503342.
    
\bibitem{a}Sania, A., Imed, B., Mosbahi, B., \& Saber, N. (2023). Cohomology of compatible BiHom-Lie algebras. arXiv preprint arXiv:2303.12906.
\bibitem{b}Zahari, A., Mosbahi, B., \& Basdouri, I. (2023). Classification, Derivations and Centroids of Low-Dimensional Complex BiHom-Trialgebras. arXiv preprint arXiv:2304.06781.
\bibitem{c}Zahari, A., Mosbahi, B., \& Basdouri, I. (2023). Classification, Derivations and Centroids of Low-Dimensional Complex BiHom-Trialgebras. arXiv preprint arXiv:2304.06781.
\bibitem{d}Mosbahi, B., Zahari, A., \& Basdouri, I. (2023). Classification, $\alpha $-Inner Derivations and $\alpha $-Centroids of Finite-Dimensional Complex Hom-Trialgebras. arXiv preprint arXiv:2305.00471.
\bibitem{e}Mosbahi, B., Asif, S., \& Zahari, A. (2023). Classification of tridendriform algebra and related structures. arXiv preprint arXiv:2305.08513.
\bibitem{f}Fiidow, M. A., Zahari, A., \& Mosbahi, B. (2023). Quasi-Centroids and Quasi-Derivations of Low Dimensional Associative Algebras. arXiv preprint arXiv:2306.14331.
\bibitem{g}Asif, S., Wang, Y., Mosbahi, B., \& Basdouri, I. (2023). Cohomology and deformation theory of $\mathcal {O} $-operators on Hom-Lie conformal algebras. arXiv preprint arXiv:2312.04121.
\bibitem{h}Mansuroglu, N., \& Mosbahi, B. (2024). On structures of BiHom-Superdialgebras and their derivations. arXiv preprint arXiv:2404.12098.
\bibitem{i}Mansuroglu, N., \& Mosbahi, B. (2024). Generalized derivations of BiHom-supertrialgebras. arXiv preprint arXiv:2404.12112.
\bibitem{j}Mainellis, E., Mosbahi, B., \& Zahari, A. (2024). Cohomology of BiHom-Associative Trialgebras. arXiv preprint arXiv:2404.15567.
\bibitem{k}Mainellis, E., Mosbahi, B., \& Zahari, A. (2024). Compatible Associative Algebras and Some Invariants. arXiv preprint arXiv:2405.18243.
\bibitem{l}Imed, B., \& Mosbahi, B. (2024). Classification of ($\rho,\tau,\sigma $)-derivations of two-dimensional left-symmetric dialgebras. arXiv preprint arXiv:2411.05716.
\bibitem{m}Imed, B., Lerbet, J., \& Mosbahi, B. (2024). Quasi-Centroids and Quasi-Derivations of low-dimensional Zinbiel algebras. arXiv preprint arXiv:2411.09532.
\bibitem{n}Mosbahi, M., Elgasri, S., Lajnef, M., Mosbahi, B., \& Driss, Z. (2021). Performance enhancement of a twisted Savonius hydrokinetic turbine with an upstream deflector. International Journal of Green Energy, 18(1), 51-65.
\bibitem{o}Mosbahi, M., Lajnef, M., Derbel, M., Mosbahi, B., Aricò, C., Sinagra, M., \& Driss, Z. (2021). Performance improvement of a drag hydrokinetic turbine. Water, 13(3), 273.
\bibitem{p}Mosbahi, M., Derbel, M., Lajnef, M., Mosbahi, B., Driss, Z., Aricò, C., \& Tucciarelli, T. (2021). Performance study of twisted Darrieus hydrokinetic turbine with novel blade design. Journal of Energy Resources Technology, 143(9), 091302.
\bibitem{q}Mosbahi, M., Lajnef, M., Derbel, M., Mosbahi, B., Driss, Z., Aricò, C., \& Tucciarelli, T. (2021). Performance improvement of a Savonius water rotor with novel blade shapes. Ocean Engineering, 237, 109611.
\bibitem{r}Mosbahi, M., Derbel, M., Hannachi, M., Mosbahi, B., Driss, Z., Aricò, C., \& Tucciarelli, T. (2023). Performance study of spiral Darrieus water rotor with V-shaped blades. Proceedings of the Institution of Mechanical Engineers, Part C: Journal of Mechanical Engineering Science, 237(21), 4979-4990.
\bibitem{s}Mosbahi, B., Zahari, A., Basdouri, I. (2023). Classification, $\alpha$-Inner Derivations and $\alpha$-Centroids of Finite-Dimensional Complex Hom-Trialgebras. Pure and Applied Mathematics Journal, 12(5), 86-97. https://doi.org/10.11648/j.pamj.20231205.12
\bibitem{t}ABDOU, A. Z., \& MOSBAHI, B. (2024). CLASSIFICATION OF COMPATIBLE ASSOCIATIVE ALGEBRAS AND SOME INVARIANTS. Available at SSRN 4877916.
\bibitem{u} Makhlouf, A., \& Zahari, A. (2020). Structure and classification of Hom-associative algebras. Acta et Commentationes Universitatis Tartuensis de Mathematica, 24(1), 79-102.
\bibitem{v} Zahari, A.; Bakayoko, I. On BiHom-Associative dialgebras.Open Journal of Mathematical Sciences, Vol. 7, No. 1 (2023).pp. 96-117.
\bibitem{w}Imed, B., Lerbet, J., \& Mosbahi, B. (2024). Central derivations of low-dimensional Zinbiel algebras. arXiv preprint arXiv:2411.15642.

\end{thebibliography}
\end{document}